\definecolor{red}{rgb}{1,0,0}
\newcommand{\vvirg}{ , \dots , }
\newcommand{\ootimes}{ \otimes \cdots \otimes }
\newcommand{\bboxtimes}{ \boxtimes \cdots \boxtimes }
\newcommand{\ooplus}{ \oplus \cdots \oplus }
\newcommand{\ttimes}{ \times \cdots \times }
\newcommand{\textsum}{{\textstyle \sum}}
\newcommand{\bfd}{\mathbf{d}}
\newcommand{\bfe}{\mathbf{e}}
\newcommand{\calO}{\mathcal{O}}
\newcommand{\calV}{\mathcal{V}}
\newcommand{\bbC}{\mathbb{C}}
\newcommand{\bbN}{\mathbb{N}}
\newcommand{\bbP}{\mathbb{P}}
\newcommand{\frakk}{\mathfrak{k}}
\newcommand{\rmR}{\mathrm{R}}
\newcommand{\bfdelta}{\bfmath{\delta}}
\renewcommand{\phi}{\varphi}
\newcommand{\eps}{\varepsilon}
\renewcommand{\theta}{\vartheta}
\renewcommand{\Im}{\mathrm{Im} \;}  
\DeclareMathOperator{\codim}{codim}
\DeclareMathOperator{\Sym}{Sym}
\newcommand{\bfmath}[1]{\mbox{\boldmath $#1$}}
\newcommand{\fillwidthof}[3][c]% \fillwidthof[pos]{textToReplace}{textToPrint}
	{%
		\parbox
		{%
			\widthof{#2}%
		}%
		{%
			\ifx#1c%
				\centering#3%
			\else\ifx#1l%
				#3\hfill%
			\else\ifx#1r%
				\hfill#3%
			\fi\fi\fi%
		}%
	}%
\def\mylettrine#1#2 {\lettrine{#1}{#2}\space}
\newcommand{\partinto}[1][]{\smash{\mathord{\mathchoice{%
  \xymatrix@=0.4em@1{%
  \ar@{|-}[rr]_-*--{\scriptstyle #1}
  &*{\phantom{\scriptstyle{#1}}}&}
}{
  \xymatrix@=0.25em@1{%
  \ar@{|-}[rr]_-*--{\scriptstyle #1}
  &*{\phantom{\scriptstyle{#1}}}&}
}{
  \xymatrix@=0.2em@1{%
  \ar@{|-}[rr]_-*--{\scriptscriptstyle #1}
  &*{\phantom{\scriptscriptstyle{#1}}}&}
}{}}}}
\newcommand{\partintonosmash}[1][]{\mathord{\mathchoice{%
  \xymatrix@=0.4em@1{%
  \ar@{|-}[rr]_-*--{\scriptstyle #1}
  &*{\phantom{\scriptstyle{#1}}}&}
}{
  \xymatrix@=0.25em@1{%
  \ar@{|-}[rr]_-*--{\scriptstyle #1}
  &*{\phantom{\scriptstyle{#1}}}&}
}{
  \xymatrix@=0.2em@1{%
  \ar@{|-}[rr]_-*--{\scriptscriptstyle #1}
  &*{\phantom{\scriptscriptstyle{#1}}}&}
}{}}}
\newcommand{\partintostar}[1][]{\smash{\mathord{\mathchoice{%
  \xymatrix@=0.4em@1{%
  \ar@{|-}[rr]_-*--{\scriptstyle #1}^-*--{\scriptstyle \ast}
  &*{\phantom{\scriptstyle{#1}}}&}
}{
  \xymatrix@=0.25em@1{%
  \ar@{|-}[rr]_-*--{\scriptstyle #1}^-*--{\scriptstyle \ast}
  &*{\phantom{\scriptstyle{#1}}}&}
}{
  \xymatrix@=0.2em@1{%
  \ar@{|-}[rr]_-*--{\scriptscriptstyle #1}^-*--{\scriptstyle \ast}
  &*{\phantom{\scriptscriptstyle{#1}}}&}
}{}}}}
\newcommand{\partintostarnosmash}[1][]{\mathord{\mathchoice{%
  \xymatrix@=0.4em@1{%
  \ar@{|-}[rr]_-*--{\scriptstyle #1}^-*--{\scriptstyle \ast}
  &*{\phantom{\scriptstyle{#1}}}&}
}{
  \xymatrix@=0.25em@1{%
  \ar@{|-}[rr]_-*--{\scriptstyle #1}^-*--{\scriptstyle \ast}
  &*{\phantom{\scriptstyle{#1}}}&}
}{
  \xymatrix@=0.2em@1{%
  \ar@{|-}[rr]_-*--{\scriptscriptstyle #1}^-*--{\scriptstyle \ast}
  &*{\phantom{\scriptscriptstyle{#1}}}&}
}{}}}
\newcommand{\rank}{\mathrm{R}}
\newcommand{\uR}{\underline{\mathrm{R}}}
\newcommand{\cR}{\mathrm{cR}}
\newcommand{\ucR}{\underline{\mathrm{cR}}}
\newcommand{\acR}{\mathrm{cR}^{{}^\otimes}}
\newcommand{\aR}{\mathrm{R}^{{}^\otimes}}
\newcommand{\cat}{\mathrm{cat}}
\newcommand{\rk}{\mathrm{rk}}
 \title{Border Waring Rank via Asymptotic Rank}
 \newtheorem{theorem}{Theorem}[section]
 \newtheorem{proposition}[theorem]{Proposition}
 \newtheorem{lemma}[theorem]{Lemma}
  \newtheorem{corollary}[theorem]{Corollary}
\theoremstyle{remark} 
  \newtheorem{remark}[theorem]{Remark}
 \newtheorem{example}[theorem]{Example}
\author[M. Christandl]{Matthias Christandl}
\author[F. Gesmundo]{Fulvio Gesmundo}
\address[M. Christandl, F. Gesmundo]{QMATH, University of Copenhagen, Universitetsparken 5, 2100 Copenhagen O., Denmark}
\author[A. Oneto]{Alessandro Oneto}
\address[A. Oneto]{Otto-von-Guericke Universit\"at Magdeburg, Germany}
\email[M. Christandl]{christandl@math.ku.dk}
\email[F. Gesmundo]{fulges@math.ku.dk}
\email[A. Oneto]{alessandro.oneto@ovgu.de, aless.oneto@gmail.com}
\keywords{Waring rank, border rank, apolarity}
\subjclass{15A69 (primary), 13A02, 51N35}
\begin{document}
 \begin{abstract}
We investigate an extension of a lower bound on the Waring (cactus) rank of homogeneous forms due to Ranestad and Schreyer. We show that for particular classes of homogeneous forms, for which a generalization of this method applies, the lower bound extends to the level of border (cactus) rank. The approach is based on recent results on tensor asymptotic rank.
 \end{abstract}
  
  \maketitle

  \thispagestyle{empty}
\section{Introduction}\label{sec: intro}
Let $f$ be a homogeneous polynomial, or form, of degree $d$ in $n+1$ variables over the complex numbers. The classical \textit{Waring problem} for homogeneous polynomials asks:
\begin{center}
	\textit{What is the minimum number of linear forms $\ell_i$ needed to write $f = \sum_{i=1}^r \ell_i^d$?}
\end{center}
This number is called the \textit{Waring rank}, or simply \textit{rank}, of $f$, denoted by $\rmR_d(f)$. From a geometric perspective, the Waring rank of $f$ is the \textit{rank with respect to the $d$-th Veronese embedding $\calV_{n,d}$ of $\bbP \bbC^{n+1}$ in $\bbP \Sym^d \bbC^{n+1}$}, where $\bbC^{n+1} \simeq \Sym^1 \bbC^{n+1}$ is the vector space of linear forms in $n+1$ variables $x_0\vvirg x_n$ and $\Sym^d \bbC^{n+1}$ is the vector space of homogeneous polynomials of degree $d$ in $x_0 \vvirg x_n$. Indeed, let $\nu_d : \bbP \bbC^{n+1} \to \bbP \Sym^d \bbC^{n+1}$ be the Veronese embedding defined by $\nu_d([\ell]) = [\ell^d]$, so that $\calV_{n,d} = \nu_d(\bbP \bbC^{n+1})$. Then, the Waring rank of $f \in \Sym^d \bbC^{n+1}$ is 
\begin{equation}\label{eqn: def of rank via points}
\rmR_d(f) = \min\left\{ r : \exists \ell_1,\ldots,\ell_r \in \bbC^{n+1} \text{ such that } [f] \in \langle \nu_d([\ell_1]) \vvirg \nu_d([\ell_r]) \rangle \subseteq \bbP \Sym^d \bbC^{n+1}\right\}. 
\end{equation}
The notion of Waring rank generalizes the rank of a matrix, and indeed, if $f$ is a quadratic form, its Waring rank coincides with the usual rank of the associated symmetric matrix. 

The Waring problem has a long history, dating back at least to Sylvester \cite{Syl51} who gave a complete solution in the case of binary forms, i.e., homogeneous polynomials in two variables. A complete answer has been given for generic forms for every degree and number of variables in the celebrated Alexander-Hirschowitz Theorem \cite{AH95:SolutionGenericWaring}. As far as specific forms are concerned, only few cases are known: relevant for this work is the answer in the case of monomials, provided by Carlini, Catalisano and Geramita \cite{CCG12:SolutionWaringMonomials}. Their result shows that if $m = x_0^{a_0}\cdots x_n^{a_n}$ is a monomial with $a_0 = \min_i\{a_i\}$, then
\[
 \rank_{d}(m) = \frac{1}{a_0+1}\prod_{i=0}^n (a_i+1).
\]
Over other fields the problem remains open. For monomials in two variables, it is known that the real Waring rank always equals the degree \cite{BCG:BinaryMonomials}. For monomials in more variables, it is known that the real Waring rank and the complex Waring rank coincide if and only if one of the exponents is equal to one \cite{CKOV17:Monomials}; to the extent of our knowledge, already the real Waring rank of the monomial $x_0^2x_1^2x_2^2$ is not known. Over other subfields of the complex numbers, we refer to \cite{RT:BinaryForms} for some partial result. We refer to \cite{BCCGO:HitchhikerGuide} for an extensive survey on the state-of-the-art on the subject from an algebraic and geometric point of view. 

The Waring rank fails to be upper semicontinuous, unlike matrix rank. More precisely, there are examples of sequences $(f_\eps)_{\eps > 0} \subset \Sym^d \bbC^{n+1}$ such that $\lim_{\eps \rightarrow 0}f_\eps = f$ but $\rmR_d(f_\eps) < \rmR_d(f)$, for any $\eps$. The very first example of this phenomenon, already known to Sylvester \cite{Sylv:PrinciplesCalculusForms}, is for the monomial $f = x_0x_1^2$: indeed $\rmR_{3}(f) = 3$ but the sequence $f_\eps = \frac{1}{3\eps}\left((\eps x_0 + x_1)^3 - x_1^3\right)$, for $\eps > 0$, converges to $f$ when $\eps \to 0$ and $\rmR_3(f_\eps) = 2$. The notion of \emph{border Waring rank}, introduced by Bini \cite{Bini:RelationsExactApproxBilAlg} in the setting of tensors, but essentially dating back to Terracini \cite{Ter15:RappresFormeQuaternarie}, is the semicontinuous closure of the Waring rank; for $f \in \Sym^d \bbC^{n+1}$, the \emph{border Waring rank} of $f$, or simply \emph{border rank}, denoted $\uR_{d}(f)$, is the minimum $r$ such that $f$ can be approximated by forms of rank $r$; more precisely
\[
\uR_{d}(f) = \min\left\{r : \exists (f_\eps)_{\eps > 0} \subset \Sym^d\bbC^{n+1} \text{ such that }f = \lim_{\eps \rightarrow 0} f_\eps \text{ with } \rank_{d}(f_\eps) = r\right\}.
\]
The notion of border Waring rank can be defined in terms of \textit{secant varieties of Veronese varieties}. The $r$-th secant variety of $\calV_{n,d}$, denoted $\sigma_r(\calV_{n,d})$, is the closure (equivalently in the Zariski or the Euclidean topology) of the union of all linear spaces spanned by $r$ points of $\calV_{n,d}$; in other words
\[
\sigma_r(\calV_{n,d}) = \left\{ [f] \in \bbP \Sym^d\bbC^{n+1} ~:~ \uR_{d}(f) \leq r\right\}. 
\]

The Waring rank and the border Waring rank have scheme-theoretic analog notions, called \emph{cactus (Waring) rank} and \emph{border cactus (Waring) rank}. The cactus rank and the border cactus rank of $f \in \Sym^d \bbC^{n+1}$ are defined, respectively, as follows:
\begin{align*}
 \cR_d(f) &= \min\left\{ r : \exists S \subset \bbP \bbC^{n+1} \text{ a $0$-dimensional scheme of length } r \text{ such that }[f] \in \langle \nu_d(S) \rangle\right\}; \\
 \ucR_d(f) &= \min\left\{ r : \exists (f_\eps)_{\eps > 0} \subset \Sym^d\bbC^{n+1} \text{ such that }f = \lim_{\eps \rightarrow 0} f_\eps \text{ with } \cR_{d}(f_\eps) = r\right\}.
\end{align*}
Note that the notion of cactus rank generalizes the notion of rank, where the $0$-dimensional $S$ is required to be also reduced, i.e., a set of points; see \eqref{eqn: def of rank via points}. In particular, we have the inequalities $\cR_d(f) \leq \rmR_d(f)$ and $\ucR_d(f) \leq \uR_d(f)$. Cactus rank and border rank are not comparable in general: there are examples of forms $f \in \Sym^d \bbC^{n+1}$ with $\uR_d(f) < \cR_d(f)$ (see, e.g., \cite[Example 2.8]{BerBraMou:ComparisonDifferentNotionsRanksSymmetricTensors}) and examples of forms with $\cR_d(f) < \uR_d(f)$ (see, e.g., \cite{BerRan:CactusRankCubicForm}).

One can define a scheme-theoretic analog of the secant variety, called \emph{cactus variety} \cite{BB14:Cactus}:
\[
 \frakk_r(\calV_{n,d}) = \left\{ [f] \in \bbP \Sym^d\bbC^{n+1} ~:~ \ucR_{d}(f) \leq r\right\}. 
\]

Little is known about the geometric structure of cactus varieties: the difficulty lies in the fact that the Hilbert scheme of points of $\bbP \bbC^{n+1}$ is not irreducible except for few small values of $n$, see \cite{CarErmVelVir:HilbertScheme8,BB14:Cactus}. In general, there is one component, the so-called \emph{smoothable component}, containing schemes $S$ arising as \emph{flat limit} of sets of distinct points, and several other components, whose points are so-called \emph{non-smoothable $0$-dimensional schemes}. These schemes are responsible of the strict inequalities $\ucR_d(f) < \uR_d(f)$, and $\cR_d(f) < \uR_d(f)$ when they occur. We refer to \cite{Jeli:PathologiesHilbertScheme} for an extensive explanation of this theory and of these phenomena.

Computing rank, border rank, cactus rank and border cactus rank of an explicit homogeneous polynomial is a hard problem. Upper bounds are often found by providing explicit expressions of the form. As far as lower bounds are concerned, several methods have been proposed in the literature. One of the classical techniques is based on \emph{flattening methods}. Informally, these consider linear maps from the space of forms to a space of matrices: from the lower bound on the matrix rank of the image of a form, one can deduce a lower bound on the Waring rank of the form; moreover, exploiting semicontinuity of matrix rank, one can show that these lower bounds pass to border rank. We refer to \cite{LanOtt:EqnsSecantVarsVeroneseandOthers} for the precise statements and to \cite[\S7.2]{BalBerChrGes:PartiallySymRkW} for a more general description of the method. In \cite{Galazka:VectorBundlesGiveEqnsForCactus} and related work, it was shown that flattening techniques give, in fact, lower bounds on the cactus rank, and by semicontinuity these lower bounds extend to border cactus rank.

A different method, based on apolarity theory, was introduced by Ranestad and Schreyer in \cite[Proposition~1]{RanSch:RankSymmetricForm}. We refer to Section \ref{subsec: apolarity} for the definitions regarding apolarity and to Section \ref{sec: RS} for an exposition of the Ranestad-Schreyer method. We mention here that the lower bound of \cite[Proposition~1]{RanSch:RankSymmetricForm} was stated for cactus rank, and hence for rank. In \cite[Theorem 5.13]{Tei14:GeometricBounds}, Teitler proposed a generalization of this bound to partially symmetric tensors (see section \ref{subsec: partially symmetric} for an introduction of partially symmetric ranks); however, the proof has a gap \cite{personalComm}. For this reason, we introduce the following condition for partially symmetric tensors, which identifies those tensors for which \cite[Theorem 5.13]{Tei14:GeometricBounds} holds. We say that a partially symmetric tensor $t \in S^{d_1} V_1 \ootimes S^{d_k} V_k$ for which the apolar ideal $t^\perp$ is generated in multidegree $\bfdelta = (\delta_1 \vvirg \delta_k)$ satisfies condition $(\star)$ if
\begin{equation}
\rmR_{\bfd}(t) \geq \frac{\dim A_t}{\delta_1 \cdots \delta_k} \tag{$\star$}
\end{equation}
where $A_t = \Sym( V_1 \ooplus V_k) / t^\perp$ is the multigraded apolar algebra of $t$. Analogously, we say that a tensor $t$ satisfies the condition $(\star_c)$ if 
\begin{equation}
\cR_{\bfd}(t) \geq \frac{\dim A_t}{\delta_1 \cdots \delta_k}. \tag{$\star_c$}
\end{equation}
For instance, symmetric tensors satisfy the condition $(\star_c)$ by \cite[Proposition~1]{RanSch:RankSymmetricForm}. Clearly, a tensor that satisfies condition $(\star_c)$ satisfies condition $(\star)$ as well.

\begin{theorem}\label{thm: RS for border rank}
 Let $f \in \Sym^d \bbC^{n+1}$ be a homogeneous polynomial such that $f^\perp$ is generated in degree $\delta$. If, for every $k$, $f^{\otimes k} \in (S^d \bbC^{n+1})^{\otimes k}$ satisfies condition $(\star)$ (resp. condition $(\star_c)$), then $\uR_{d}(f) \geq  \frac{\dim A_f}{\delta}$ (resp. $\ucR_d(f) \geq \frac{\dim A_f}{\delta})$).
\end{theorem}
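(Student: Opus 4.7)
The plan is to apply condition $(\star)$ (resp.\ $(\star_c)$) to each tensor power $f^{\otimes k}$, take the $k$-th root of the resulting inequality, and let $k \to \infty$, exploiting the asymptotic equality of rank and border rank to convert the bound from rank to border rank.

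First I would identify the apolar structure of $f^{\otimes k}$. Setting $V = \bbC^{n+1}$ and using the natural identification $\Sym\bigl((V^*)^{\oplus k}\bigr) \cong (\Sym V^*)^{\otimes k}$, together with the compatibility of the contraction pairing with tensor products, one obtains the standard isomorphism
\[
 A_{f^{\otimes k}} \;\cong\; A_f^{\otimes k}, \qquad \dim A_{f^{\otimes k}} \;=\; (\dim A_f)^k.
\]
If $f^\perp$ is generated in degree $\delta$, then the ideal $(f^{\otimes k})^\perp = \sum_{i=1}^k R^{\otimes(i-1)} \otimes f^\perp \otimes R^{\otimes(k-i)}$ is generated by lifts of generators of $f^\perp$ along each of the $k$ tensor factors; each such generator has multidegree with a single nonzero component equal to $\delta$, so the ideal is generated in multidegree $\bfdelta = (\delta, \ldots, \delta)$. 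Hence condition $(\star)$ applied to $f^{\otimes k}$ yields
\[
 \rmR_{\bfd}(f^{\otimes k}) \;\geq\; \frac{(\dim A_f)^k}{\delta^k}, \qquad \text{with } \bfd = (d, \ldots, d),
\]
so that $\rmR_{\bfd}(f^{\otimes k})^{1/k} \geq \dim A_f / \delta$ for every $k$.

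The last step is to bound the left-hand side from above by $\uR_d(f)$ in the limit. Border rank is submultiplicative under tensor product, $\uR_{\bfd}(f^{\otimes k}) \leq \uR_d(f)^k$, obtained by tensoring optimal border decompositions of $f$. Bini's interpolation lemma converts a border rank $\uR_d(f) = r$ expression with error order $\leq Q$ (where $Q$ depends only on the format of $f$) into a genuine rank decomposition of $f^{\otimes k}$ of size at most $(kQ+1)\, r^k$: tensoring $k$ copies of the approximating polynomial curve yields an $\eps$-polynomial of degree $\leq kQ$ whose constant term is $f^{\otimes k}$, and Lagrange interpolation at $kQ+1$ distinct values of the parameter recovers a rank expression. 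Hence
\[
 \rmR_{\bfd}(f^{\otimes k})^{1/k} \;\leq\; (kQ+1)^{1/k}\, \uR_d(f) \;\xrightarrow{k \to \infty}\; \uR_d(f),
\]
which combined with the previous inequality yields $\uR_d(f) \geq \dim A_f / \delta$.

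The cactus case is formally parallel: submultiplicativity of border cactus rank, $\ucR_{\bfd}(f^{\otimes k}) \leq \ucR_d(f)^k$, follows from taking products of the flat families of length-bounded $0$-dimensional subschemes realizing the border cactus decomposition of $f$, and a cactus analogue of Bini's interpolation converts border cactus expressions into genuine cactus ones at polynomial cost. The main obstacle lies precisely in these two ingredients: in verifying the multigraded Bini interpolation and its cactus counterpart, and in ensuring that the error order $Q$ is uniformly bounded in terms only of the format of $f$, so that the factor $(kQ+1)^{1/k} \to 1$ is legitimate. Once these are in place, the argument is essentially formal.
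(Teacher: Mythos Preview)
Your proposal is correct and follows essentially the same strategy as the paper: establish the multiplicative lower bound $\rmR_{\bfd}(f^{\otimes k}) \geq (\dim A_f/\delta)^k$ from condition $(\star)$ together with $A_{f^{\otimes k}} \cong A_f^{\otimes k}$ and the multidegree of generation of $(f^{\otimes k})^\perp$ (this is the paper's Proposition~\ref{prop: RS multiplicative} and Corollary~\ref{corol: multiplicative RS}), then pass to the limit using that tensor asymptotic rank is bounded above by border rank.

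The one noteworthy difference is in how the step $\aR_d(f) \leq \uR_d(f)$ (and its cactus analogue) is obtained. For border rank you invoke Bini interpolation directly, which is fine and is indeed one of the standard proofs. For border cactus rank, however, the Bini argument does not transplant cleanly: a border cactus expression gives a family $f_\eps$ with $\cR_d(f_\eps)=r$, but there is no evident polynomial-in-$\eps$ decomposition whose error order is bounded a priori, because the schemes $S_\eps$ need not vary algebraically in a controlled way. You rightly flag this as the main obstacle. The paper circumvents it (Proposition~\ref{prop: lower bound to border cactus}) by a geometric argument: pick an irreducible component $Z$ of $\frakk_r(\calV_{d,n})$ containing $[f]$, slice with a generic linear space of complementary dimension to get a curve $E\ni [f]$ of some degree $e$ whose generic point has cactus rank $r$, and then use that $ek+1$ generic points on $\nu_k(E)$ span $[f^{\otimes k}]$. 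This yields $\cR_{\bfd}(f^{\otimes k}) \leq (ek+1)\,r^k$ with $e$ independent of $k$, which is exactly the polynomial overhead you were after. So your outline is sound; the missing ingredient for the cactus half is precisely this curve-slicing substitute for Bini interpolation.
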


\begin{remark}
 In a previous version of this paper, Theorem \ref{thm: RS for border rank} was stated unconditionally to conditions $(\star)$ and $(\star_c)$. The multiplicativity result was based on \cite[Theorem 5.13]{Tei14:GeometricBounds}. It was pointed out to us \cite{personalComm} that the proof of \cite[Theorem 5.13]{Tei14:GeometricBounds} has a gap, which in turn made the proof of the original Theorem \ref{thm: RS for border rank} incomplete. For this reason, we restated it introducing the conditions  $(\star)$ and $(\star_c)$. Anyway, although the proof of \cite[Theorem 5.13]{Tei14:GeometricBounds} is incomplete, we do not have any counterexample to its statement.
 \end{remark}

Our approach is based on the following two fundamental building blocks.
\begin{itemize}
 \item The quantity $\frac{\dim A_f}{\delta}$ is multiplicative under tensor product. In other words, for $j = 1 \vvirg k$, suppose $f_j$ is a homogeneous polynomials with apolar ideal is generate in degree $\delta_j$. Let $t = f_1 \ootimes f_k$: then $t^\perp$ is generated in multidegree $\bfdelta = (\delta_1 \vvirg \delta_k)$ and $\dim A_t = \dim A_{f_1} \cdots \dim A_{f_k}$. See 
  Proposition \ref{prop: RS multiplicative} and Corollary \ref{corol: multiplicative RS}.
 \item Conditions $(\star)$ and $(\star_c)$ guarantee that $\frac{\dim A_f }{\delta}$ is a multiplicative lower bound for the rank and the cactus rank of $f$, respectively. It turns out that multiplicative rank lower bounds for rank ``pass to the border rank''. More precisely, multiplicative rank lower bounds are lower bounds for the tensor asymptotic rank introduced in \cite{ChrGesJen:BorderRankNonMult}, see Lemma~\ref{lemma:MultiplicativeLowerBound}, and in turn lower bounds for border rank by \cite[Proposition~6.2]{ChrGesJen:BorderRankNonMult} and \cite[Theorem~8]{ChrJenZui:NonMultTensorRank}. As for cactus rank, we observe that an analog of \cite[Proposition~6.2]{ChrGesJen:BorderRankNonMult} holds for the cactus variety, see Proposition \ref{prop: lower bound to border cactus}: this extends multiplicative lower bounds for cactus rank to border cactus rank.
\end{itemize}

We believe that this approach can be used to solve the problem of border rank for monomials. It has been conjectured for almost a decade that if $m = x_0^{a_0}\cdots x_n^{a_n}$ is a monomial, in particular,
\[
\ucR_{d}(m) = \uR_{d}(m) = \frac{1}{a_n+1}\prod_{i=0}^n (a_i+1).
\]

The upper bound $\uR_d(m) \leq \frac{1}{a_n+1}\prod_{i=0}^n (a_i+1)$ was proved by Landsberg and Teitler in \cite[Theorem 11.2]{LaTe10:RanksBorderRanks} and equality was shown under the assumption $a_n \geq a_0+ \cdots + a_{n-1}$. In \cite{Oe16:BorderMonomials}, Oeding proved equality for a number of other families of monomials. These results achieved the desired lower bound via flattening methods. At the UMI-SIMAI-PTM Joint Meeting in Wroc{\l}aw~(PL) in September 2018, Buczy{\'n}ski announced a proof for a number of other cases obtained by using an original method based on apolarity theory \cite{BucBuc:BorderApolarity}, but the problem is still open in general.

 The equality $\cR_d(m) = \frac{1}{a_n+1}\prod_{i=0}^n (a_i+1)$ holds for cactus rank \cite{RanSch:RankSymmetricForm}. In particular, if the conditions $(\star_c)$ holds in the case of tensor powers of monomials, then cactus rank, cactus border rank and border rank of monomials coincide. 

\section{Preliminary results}\label{sec: preliminaries}
We first recall the partially symmetric versions of rank and border rank, and we discuss apolarity theory, both in the homogeneous and the multihomogeneous setting. We provide some results on multiplicative lower bounds, their relations with tensor asymptotic rank and to its cactus~analog.

\subsection{Partially symmetric tensors}\label{subsec: partially symmetric}

Let $V_1 \vvirg V_k$ be complex vector spaces of dimension $n_1+1 \vvirg n_k + 1$, respectively; for each $i = 1,\ldots,k$, write $\{ x_{i,j} : j = 0 \vvirg n_i\}$ for a basis of $V_i$. Consider the ring of polynomials in all the variables $x_{ij}$, that is $$\bbC[x_{i,j}: i = 1 \vvirg k, j= 0 \vvirg n_i] \simeq \Sym^\bullet (V_1 \ooplus V_k),$$ identified with the symmetric algebra of $V_1 \ooplus V_k$. Then, $$\Sym^\bullet (V_1 \ooplus V_k) \simeq  \Sym^\bullet V_1 \ootimes \Sym ^\bullet V_k,$$ where $\Sym^\bullet V_i$ is the ring of polynomials in the variables $x_{i,0} \vvirg x_{i, n_i}$.

In particular, $\bbC[x_{ij}: i = 1 \vvirg k, j= 0 \vvirg n_i]$ inherits the natural multigrading given by the tensor products of the symmetric algebras
\[
 \Sym^\bullet (V_1 \ooplus V_k) \simeq \bigoplus_{d_1 \vvirg d_k \geq 0} \Sym^{d_1} V_1 \ootimes \Sym^{d_k} V_k,
 \]
i.e., $\deg(x_{i,j}) = (0,\ldots,1,\ldots,0) \in \bbN^k$, where the $1$ is in the $i$-th entry. A partially symmetric tensor of multidegree $\bfd = (d_1 \vvirg d_k)$ is an element of the multigraded component $\Sym^{d_1} V_1 \ootimes \Sym^{d_k} V_k$ or, equivalently, a multihomogeneous polynomial of multidegree $\bfd$ in $\bbC[x_{i,j}]$.

If $t \in \Sym^{d_1} V_1 \ootimes \Sym^{d_k} V_k$ is a partially symmetric tensor, the \emph{partially symmetric rank} of $t$ is defined as
\[
 \rmR_{\bfd} (t) = \min \left\{ r ~:~ t = \textsum _1^r \ell_{1,j}^{d_1} \ootimes \ell_{k,j}^{d_k} , \text{ for some $\ell_{i,j} \in \Sym^1V_i$}\right\}.
\]
This is the rank with respect to the \emph{Segre-Veronese variety} $\calV_{d_1,n_1}\ttimes\calV_{d_k,n_k}$ obtained as the embedding of $\bbP V_1 \ttimes \bbP V_k$ via the linear system $|\calO_{\bbP V_1}(d_1) \ootimes \calO_{\bbP V_k}(d_k)|$ in the projective space $\bbP (\Sym^{d_1} V_1 \ootimes \Sym^{d_k} V_k)$, i.e., via the embedding 
\begin{align*}
	\nu_\bfd : \bbP V_1 \ttimes \bbP V_k &\longrightarrow \bbP (\Sym^{d_1} V_1 \ootimes \Sym^{d_k} V_k) \\
	([\ell_1],\ldots,[\ell_k]) &\longmapsto \left[\ell_1^{d_1}\ootimes\ell_k^{d_k}\right].
\end{align*}
In the case $k = 1$, this coincides with the Waring rank. In the case $k = 2,d_1=d_2=1$, the tensor $t$ is a bilinear form and the partially symmetric rank coincides with the rank of the associated matrix.

Similarly to the homogeneous setting described before, the \emph{partially symmetric border rank} of a tensor $t \in \Sym^{d_1} V_1 \ootimes \Sym^{d_k} V_k$ is the minimum $r$ such that $t$ can be approximated by partially symmetric tensors of rank $r$, namely
\[
 \uR_{\bfd}(t) = \min \left\{ r ~:~ \exists (t_\eps)_{\eps > 0} \subset \Sym^{d_1} V_1 \ootimes \Sym^{d_k} \text{ such that } t = \lim_{\eps \to 0} t_\eps \text{ with } \rmR_{\bfd}(t_\eps) = r \right\}.
\]
Equivalently, the number $\uR_{\bfd}(t)$ is the smallest $r$ such that $[t] \in\bbP (\Sym^{d_1} V_1 \ootimes \Sym^{d_k} V_k)$ belongs to the $r$-th secant variety $ \sigma_r( \calV_{d_1,n_1} \ttimes \calV_{d_k,n_k} )$ of the Segre-Veronese variety.

It is straightforward to verify that partially symmetric rank and border rank are submultiplicative under tensor product in the following sense. Let $t,s$ be partially symmetric tensors, say $t \in \Sym^{d_1} V_1 \ootimes \Sym^{d_k} V_k$ and $s \in \Sym^{e_1} V_{k+1} \ootimes \Sym^{e_\ell} V_{k+\ell}$. Then
\[
\rmR_{\bfd | \bfe}(t \otimes s) \leq \rmR_{\bfd}(t) \cdot \rmR_{\bfe}(s) \quad \text{and} \quad \uR_{\bfd | \bfe}(t \otimes s) \leq \uR_{\bfd}(t) \cdot \uR_{\bfe}(s), 
\]
where $\bfd | \bfe$ denotes the concatenation of $\bfd = (d_1,\ldots,d_k)$ and $\bfe = (e_1,\ldots,e_\ell)$.

Both inequalities can be strict, as observed in \cite{ChrJenZui:NonMultTensorRank,ChrGesJen:BorderRankNonMult,BalBerChrGes:PartiallySymRkW}.

Cactus analog of rank and border rank can be defined in the partially symmetric setting as well. Given $t \in \Sym^{d_1} V_1 \ootimes \Sym^{d_k}V_k$, define
\begin{align*}
 \cR_\bfd(t) &= \min\left\{ r : \exists S \subset \bbP V_1 \ttimes \bbP V_k \text{ a $0$-dim. scheme of length } r \text{ such that }[t] \in \langle \nu_\bfd(S) \rangle\right\}; \\
 \ucR_d(t) &= \min\left\{ r : \exists (t_\eps)_{\eps > 0} \subset \Sym^{d_1} V_1 \ootimes \Sym^{d_k} \text{ such that } t = \lim _{\eps \to 0} t_\eps \text{ with } \cR_\bfd(t_\eps) = r\right\}. 
\end{align*}
Submultiplicativity holds for cactus rank and border rank, as well. Let $t,s$ be partially symmetric tensors, say $t \in \Sym^{d_1} V_1 \ootimes \Sym^{d_k} V_k$ and $s \in \Sym^{e_1} V_{k+1} \ootimes \Sym^{e_\ell} V_{k+\ell}$. Then,
\[
\cR_{\bfd | \bfe}(t \otimes s) \leq \cR_{\bfd}(t) \cdot \cR_{\bfe}(s) \quad \text{and} \quad \ucR_{\bfd | \bfe}(t \otimes s) \leq \ucR_{\bfd}(t) \cdot \ucR_{\bfe}(s).
\]

\subsection{Apolarity theory}\label{subsec: apolarity}

Apolarity is a classical approach to the Waring problem: it dates back to Sylvester \cite{Sylv:PrinciplesCalculusForms} and it has been used, directly or indirectly, to achieve most of the known results for Waring rank of specific forms.

We briefly present the subject and we refer the reader to \cite{IarrKan:PowerSumsBook, Gera:InvSysFatPts, CarGriOed:FourLecturesSecantVarieties,BCCGO:HitchhikerGuide} for a complete explanation of this material in the homogeneous setting and to \cite{Tei14:GeometricBounds,Galazka:MultigradedApolarity,GalRanVil:VarsApolarSubschToricSurfaces} for the multigraded version.

Let $V$ be a vector space of dimension $n+1$ with a basis $\{x_0 \vvirg x_n\}$ and let $\Sym^\bullet V$ be the symmetric algebra of $V$, identified with the standard graded ring of polynomials $\bbC[x_0 \vvirg x_n]$. Let $V^*$ be the dual vector space with basis $\{\partial_0,\ldots,\partial_n\}$. The symmetric algebra $\Sym^\bullet V^*$ can be regarded as the ring of differential operators with constant coefficients, where $\partial_i$ corresponds to $\frac{\partial}{\partial x_i}$, for all $i = 0,\ldots,n$. Hence, we have a natural action of $\Sym^\bullet V^*$ on $\Sym^\bullet V$ via differentiation: 
\[
\begin{array}{c c c c}
	\circ : & \Sym^\bullet V^* \times \Sym^\bullet V & \longrightarrow & \Sym^\bullet V, \\
	& (D,f) & \longmapsto & D \circ f := D(f).
\end{array}
\]

For a homogeneous polynomial $f \in \Sym^\bullet V$, the \emph{apolar ideal} of $f$ is defined by 
\[
 f^\perp = \left\{D \in \Sym^\bullet V^* : D \circ f = 0\right\}.
\]
It is clear that $f^\perp$ is a homogeneous ideal in $ \Sym^\bullet V^*$.

Given $f \in \Sym^d V$, the \textit{$e$-th catalecticant} of $f$ is the linear map
\[
 \begin{array}{c c c c}
	\cat_e(f) : & \Sym^e V^* & \longrightarrow & \Sym^{d-e} V, \\
	& D & \longmapsto & D \circ f.
\end{array}
\]
By definition, the homogeneous component of degree $e$ in $f^\perp$ coincides with the kernel of the $e$-th catalecticant map, namely $(f^\perp)_{e} = \ker ( \cat_e(f))$, and, in particular, we have $(f^\perp)_{e} = \Sym^e V^*$, for $e > \deg(f)$.

\begin{example}
	Let $m = x_0^{a_0}\cdots x_n^{a_n}$ be a monomial; then $m^\perp = (\partial_0^{a_0+1},\ldots,\partial_n^{a_n+1})$.
\end{example}

The apolarity action and the notion of apolar ideal extend to the multigraded setting and to tensor products (and even in more general settings, see \cite{Galazka:MultigradedApolarity}). The polynomial ring $\Sym ^\bullet (V_1^* \ooplus V_k^*)$ acts again via apolarity on the polynomial ring $\Sym ^\bullet (V_1 \ooplus V_k)$. If $t \in \Sym^{d_1} V_1 \ootimes \Sym^{d_k} V_k$ is a partially symmetric tensor, then $t^\perp$ is a multihomogeneous ideal in $\Sym ^\bullet (V_1^* \ooplus V_k^*)$. Now, for every $\bfe = (e_1 \vvirg e_k)$, define a multigraded catalecticant map:
\[
\begin{array}{r c r c}
	\cat_{\bfe}(t) : & \Sym^{e_1} V_1^*\ootimes \Sym^{e_k} V_k^* & \to & \Sym^{d_1-e_1} V_1 \ootimes \Sym^{d_k-e_k} V_k, \\
	& D & \mapsto & D \circ t = D(t),
\end{array}
\]
where $D$ and $t$ are regarded as elements of $\Sym ^\bullet (V_1^* \ooplus V_k^*)$ and $\Sym ^\bullet (V_1 \ooplus V_k)$, respectively. Then, the multihomogeneous components of the apolar ideal $t^\perp$ coincide with the kernels of the catalecticant maps, i.e., $(t^\perp)_\bfe = \ker(\cat_\bfe(t))$. Similarly to the homogeneous case, we have that $(t^\perp)_\bfe = \Sym^{e_1} V_1^*\ootimes \Sym^{e_k} V_k^*$ if $e_j > d_j$, for at least one $j$.

In the special case where $t = f_1 \ootimes f_k \in \Sym^{d_1} V_1 \ootimes \Sym^{d_k} V_k$ there is a direct characterization of the apolar ideal of $t$ in terms of the apolar ideals of the $f_i$'s.

Let us first recall a basic lemma from linear algebra that we will use in the proof.

\begin{lemma}\label{lemma:Kronecker}
Let $A_1,\ldots,A_k$ be linear maps, with $A_i : V_i \to W_i$.  Let $A_1 \bboxtimes A_k$ be the Kronecker product of the $A_i$'s, i.e., the linear map defined by 
\begin{align*}
 A_1 \bboxtimes A_k : V_1 \ootimes V_k &\to W_1 \ootimes W_k \\
 v_1 \ootimes v_k &\mapsto A_1(v_1) \ootimes A_k(v_k),
\end{align*}
and extended linearly. Then:
	\begin{enumerate}[{\rm (i)}]
	\item $\rk(A_1	\bboxtimes A_k) = \rk( A_1) \cdots \rk(A_k)$;
	\item $\ker(A_1	\bboxtimes A_k) = \ker(A_1)\otimes V_2 \ootimes V_k + \cdots + V_1 \ootimes V_{k-1} \otimes \ker(A_k)$.
	\end{enumerate}
\end{lemma}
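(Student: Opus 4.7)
The plan is to prove both statements by induction on $k$, reducing everything to the binary case $k=2$ via the canonical identification $A_1 \bboxtimes A_k = (A_1 \bboxtimes A_{k-1}) \boxtimes A_k$ arising from the associativity of the tensor product on both the domain and the codomain.

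For the base case $k=2$, I would fix bases of $V_1,V_2$ adapted to the rank-nullity decompositions of $A_1,A_2$: pick a basis $\{v_1,\dots,v_{m_1}\}$ of $V_1$ such that $\{v_{r_1+1},\dots,v_{m_1}\}$ spans $\ker(A_1)$, where $r_1 = \rk(A_1)$, and such that $\{A_1(v_1),\dots,A_1(v_{r_1})\}$ is linearly independent in $W_1$; analogously fix $\{v'_1,\dots,v'_{m_2}\}$ for $V_2$. Then $\{A_1(v_i) \otimes A_2(v'_j) : i \le r_1,\, j \le r_2\}$ is a linearly independent spanning set of $\Im(A_1 \boxtimes A_2)$, which gives (i). For (ii), writing a general tensor as $x = \sum c_{ij}\, v_i \otimes v'_j$, the image $(A_1 \boxtimes A_2)(x) = \sum_{i \le r_1,\, j \le r_2} c_{ij}\, A_1(v_i) \otimes A_2(v'_j)$ vanishes if and only if $c_{ij} = 0$ on the $r_1 \times r_2$ block, by the linear independence just observed. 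Splitting the remaining sum into the part indexed by $i > r_1$ (which lies in $\ker(A_1) \otimes V_2$) and the part indexed by $i \le r_1,\, j > r_2$ (which lies in $V_1 \otimes \ker(A_2)$) yields the nontrivial inclusion $\subseteq$; the reverse inclusion is immediate.

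For the inductive step, apply the $k=2$ case to the pair $\bigl(A_1 \bboxtimes A_{k-1},\, A_k\bigr)$. Part (i) follows at once by multiplying through with the induction hypothesis on $A_1 \bboxtimes A_{k-1}$. For (ii), substitute the inductive description
\[
\ker(A_1 \bboxtimes A_{k-1}) = \sum_{i=1}^{k-1} V_1 \otimes \cdots \otimes \ker(A_i) \otimes \cdots \otimes V_{k-1}
\]
into the expression $\ker(A_1 \bboxtimes A_{k-1}) \otimes V_k + (V_1 \ootimes V_{k-1}) \otimes \ker(A_k)$ provided by the base case, and distribute $\otimes V_k$ across the sum: this collapses to the desired $k$-fold expression for $\ker(A_1 \bboxtimes A_k)$. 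Since the statement is standard linear algebra, I do not expect any real obstacle; the only step that demands genuine care is the $k=2$ kernel computation, where the set-theoretic sum must not be confused with a direct sum, and working in the adapted basis above handles this transparently.
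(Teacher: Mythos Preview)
Your proof is correct and shares the paper's overall architecture: handle $k=2$ first, then induct via the identification $(A_1 \bboxtimes A_{k-1}) \boxtimes A_k$. The difference lies in the $k=2$ kernel computation. You work in adapted bases and read off directly that a tensor in the kernel has vanishing coefficients on the $r_1 \times r_2$ block, then split what remains. The paper instead argues by dimension: the inclusion $\ker(A_1)\otimes V_2 + V_1\otimes\ker(A_2) \subseteq \ker(A_1\boxtimes A_2)$ is obvious, and equality follows from Grassmann's formula together with part~(i), since
\[
(m_1-r_1)m_2 + m_1(m_2-r_2) - (m_1-r_1)(m_2-r_2) = m_1m_2 - r_1r_2 = \dim\ker(A_1\boxtimes A_2).
\]
Your approach is slightly more explicit and self-contained (it does not rely on part~(i)); the paper's is basis-free and shorter once (i) is in hand. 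Both are standard and equally valid here.
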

\begin{proof}
We prove the result in the case $k = 2$. The general result follows by induction.
\begin{enumerate}[(i)]
\item It is enough to show that $\Im(A_1 \boxtimes A_2) = \Im(A_1) \otimes \Im(A_2)$. This is immediate from the fact that tensor products are generated by product elements.
\item The inclusion of the right-hand side in the left-hand side is immediate. The other inclusion follows by a dimension argument. Let $r_i := \rk(A_i)$. From part (i), $\dim \ker(A_1 \boxtimes A_2) = m_1m_2 - \rk(A_1 \boxtimes A_2) = m_1m_2 - r_1r_2$. On the other hand, by Grassmann's formula,
		\begin{align*}
			 \dim (\ker(A_1)\otimes V_2 & + V_1\otimes\ker(A_2)) = \\
			& = (m_1-r_1)m_2 + m_1(m_2-r_2) - (m_1-r_1)(m_2-r_2) \\
			& = m_1m_2 - r_1r_2.
		\end{align*}
	\end{enumerate}
\end{proof}
Now, notice that for every $j = 1\vvirg k$, the polynomial ring $\Sym^\bullet V_j^*$ can be regarded as a subring of $\Sym^\bullet (V_1^* \ooplus V_k^*)$. If $I \subseteq \Sym^\bullet V_j^*$ is an ideal for some $j = 1 \vvirg k$, write $I^{ext}$ for the ideal generated by $I$ in $\Sym^\bullet (V_1^* \ooplus V_k^*)$.

  \begin{lemma}\label{lemma:apolar}
Let $t = f_1\otimes\ldots\otimes f_k \in \Sym^{d_1}V_1 \otimes \cdots \otimes \Sym^{d_k}V_k$. Then, 
\[
t^\perp = (f_1^\perp)^{ext} + \ldots + (f_k^\perp)^{ext}. 
\]
  \end{lemma}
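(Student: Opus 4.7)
The plan is to verify the equality by checking it in each multihomogeneous component $\bfe = (e_1,\ldots,e_k)$, since both sides are multihomogeneous ideals in $\Sym^\bullet(V_1^* \oplus \cdots \oplus V_k^*)$. Writing $\partial^{(j)}$ for elements of $\Sym^\bullet V_j^*$, the action on a split tensor factors through the factors: for $D = D_1 \otimes \cdots \otimes D_k$ with $D_j \in \Sym^{e_j} V_j^*$, a direct computation gives
\[
D \circ (f_1 \otimes \cdots \otimes f_k) = (D_1 \circ f_1) \otimes \cdots \otimes (D_k \circ f_k).
\]
Extending linearly, this shows that the multigraded catalecticant of $t$ is a Kronecker product:
\[
\cat_\bfe(t) = \cat_{e_1}(f_1) \boxtimes \cdots \boxtimes \cat_{e_k}(f_k).
\]

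Next I would apply Lemma \ref{lemma:Kronecker}(ii) with $A_j = \cat_{e_j}(f_j)$ to obtain
\[
(t^\perp)_\bfe = \ker(\cat_\bfe(t)) = \sum_{j=1}^k \Sym^{e_1}V_1^* \otimes \cdots \otimes \ker(\cat_{e_j}(f_j)) \otimes \cdots \otimes \Sym^{e_k}V_k^*,
\]
using that $\ker(\cat_{e_j}(f_j)) = (f_j^\perp)_{e_j}$. The final step is to identify this with the multidegree-$\bfe$ component of the right-hand side. Since $(f_j^\perp)^{ext}$ is the extension along the inclusion $\Sym^\bullet V_j^* \hookrightarrow \Sym^\bullet(V_1^* \oplus \cdots \oplus V_k^*)$, its $\bfe$-graded piece is exactly $\Sym^{e_1}V_1^* \otimes \cdots \otimes (f_j^\perp)_{e_j} \otimes \cdots \otimes \Sym^{e_k}V_k^*$, so summing over $j$ matches the expression above.

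There is no real obstacle here: the argument is a routine bookkeeping once the factorization of the catalecticant as a Kronecker product is in hand. The only point requiring minimal care is handling the boundary case $e_j > d_j$, where $(f_j^\perp)_{e_j} = \Sym^{e_j}V_j^*$ and the corresponding summand on the right-hand side exhausts the whole component, consistent with the fact that $(t^\perp)_\bfe$ is also the whole component whenever some $e_j > d_j$. Thus the equality holds in every multidegree, hence as ideals.
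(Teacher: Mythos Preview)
Your proposal is correct and follows essentially the same approach as the paper: verify the equality degree by degree by showing that $\cat_\bfe(t) = \cat_{e_1}(f_1) \boxtimes \cdots \boxtimes \cat_{e_k}(f_k)$ on product elements, then apply Lemma~\ref{lemma:Kronecker}(ii) together with the identification of $((f_j^\perp)^{ext})_\bfe$ as $\Sym^{e_1}V_1^* \otimes \cdots \otimes (f_j^\perp)_{e_j} \otimes \cdots \otimes \Sym^{e_k}V_k^*$. Your explicit remark on the boundary case $e_j > d_j$ is a nice addition but not strictly necessary, since it is already subsumed by the general argument.
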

  \begin{proof}
  We show that the two ideals coincide in every multidegree $\bfe = (e_1 \vvirg e_k)$.
  
   Recall that $(t^\perp)_{\bfe} = \ker (\cat_{\bfe} (t))$. Moreover, we have $$(f_i^\perp)^{ext}_{\bfe} = \Sym^{e_1} V_1^* \ootimes \Sym^{e_{i-1}} V_{i-1}^* \otimes (f_i^\perp)_{e_i} \otimes \Sym^{e_{i+1}} V_{i+1}^* \ootimes \Sym^{e_k} V_k^*$$ and recall that $(f_i^\perp)_{e_i} = \ker (\cat_{e_i}(f_i))$. 
  
  By Lemma \ref{lemma:Kronecker}(ii), it suffices to show that $\cat_{\bfe} (t) = {\cat}_{e_1}(f_1) \boxtimes \cdots \boxtimes {\cat}_{e_k}(f_k)$. To see this, we prove that both sides coincide on product elements $D = D_1 \ootimes D_k \in \Sym^{e_1} V_1^*\ootimes \Sym^{e_k} V_k^*$. Regard $ D \in \Sym^{e_1}V_1^* \otimes \ldots \otimes \Sym^{e_k}V_k^*$ as the differential operator $D = D_1 \cdots D_k \in \Sym^{\bullet}(V_1^* \ooplus V_k^*)$ and $t = f_1 \cdots f_k \in \Sym^{\bullet}(V_1 \ooplus V_k)$, where the factors of $t$ involve disjoint sets of variables and the factors of $D$ act on disjoint sets of variables. Then, we obtain
\begin{align*}
\cat_{\bfe} (t) ( D_1 \ootimes D_k) &= (D_1 \ootimes D_k) (t) \\ 
& = (D_1 \cdots D_k) ( f_1 \cdots f_k) \\ 
& =D_1(f_1)  \cdots D_k(f_k) \\ 
&= D_1(f_1)  \ootimes  D_k(f_k) \\ 
&=  {\cat}_{e_1}(f_1)(D_1) \ootimes  {\cat}_{e_k}(f_k) (D_k) \\  
&= \left[ {\cat}_{e_1}(f_1) \bboxtimes {\cat}_{e_k}(f_k) \right] (D_1 \ootimes D_k),
\end{align*}
  and therefore $ \cat_{\bfe} (t) = {\cat}_{e_1}(f_1) \bboxtimes {\cat}_{e_k}(f_k).$
\end{proof}
\begin{example}
	Let $m_1 \otimes m_2 = x_{1,0}^{a_0}\cdots x_{1,n_1}^{a_{n_1}} \otimes x_{2,0}^{b_0}\cdots x_{2,n_2}^{b_{n_2}} \in \Sym^{a}V_1 \otimes \Sym^{b}V_2$, where $a = \sum_i a_i$ and $b = \sum_i b_i$. Then, $$(m_1\otimes m_2)^\perp = \left(\partial_{1,0}^{a_0+1},\ldots,\partial_{1,n_1}^{a_{n_1}+1}\right) + \left(\partial_{2,0}^{b_0+1},\ldots,\partial_{2,n_2}^{b_{n_2}+1}\right) \subset \Sym^\bullet (V_1^* \oplus V_2^*).$$
\end{example}

\subsection{Multiplicative lower bounds and tensor asymptotic rank}\label{subsec: tensor asymptotic rank}

In \cite{ChrGesJen:BorderRankNonMult}, the notion of \emph{tensor asymptotic rank} was introduced. We recall the definition in the case of Waring rank. 

Let $f \in \Sym^d V$. The \textit{tensor asymptotic rank} of $f$ is 
\begin{equation}\label{eqn: def asymptotic rank}
\aR_{d}(f) = \lim_{k \to \infty} \left[ \rmR_{\bfd} (f^{\otimes k}) \right]^{1/k}.
\end{equation}

The limit in \eqref{eqn: def asymptotic rank} exists by Fekete's Lemma (see, e.g., \cite[pg. 189]{PolSze:ProblemsTheoremsAnalysisI}) via the submultiplicativity properties of Waring rank under tensor product and in fact it is an infimum over $k$. As a consequence, multiplicative lower bounds for $\rmR_{d}(f)$ are lower bounds for $\aR_{d}(f)$; see Lemma \ref{lemma:MultiplicativeLowerBound} below. This was observed in \cite{ChrJenZui:NonMultTensorRank} for (generalized) flattening lower bounds, but the general result holds for any multiplicative lower bound.

\begin{lemma}\label{lemma:MultiplicativeLowerBound}
Let $f \in \Sym^dV$. Assume that $M$ is a multiplicative lower bound for the rank of $f$, i.e., $\rmR_{\bfd}(f^{\otimes k}) \geq M^k$, for every $k \geq 1$. Then, $\aR_d(f) \geq M$.
\end{lemma}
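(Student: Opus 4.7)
The plan is essentially a one-line argument from the definition of asymptotic rank. By the hypothesis, for every $k \geq 1$ we have $\rmR_{\bfd}(f^{\otimes k}) \geq M^k$, where $\bfd = (d \vvirg d)$ has $k$ entries. Taking $k$-th roots of both sides yields
\[
\left[\rmR_{\bfd}(f^{\otimes k})\right]^{1/k} \geq M \qquad \text{for every } k \geq 1.
\]
Since the sequence on the left converges (in fact, as recalled immediately before the lemma, by Fekete's subadditive lemma applied to $\log \rmR_{\bfd}(f^{\otimes k})$, the limit exists and equals the infimum), I simply pass to the limit as $k \to \infty$ in this inequality to conclude
\[
\aR_d(f) = \lim_{k \to \infty} \left[\rmR_{\bfd}(f^{\otimes k})\right]^{1/k} \geq M.
\]

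There is no genuine obstacle here: the content of the lemma is really that the hypothesis ``$M$ is a multiplicative lower bound for every tensor power'' has been tailored precisely so that it survives the $k$-th root and limit operations that define $\aR_d$. The only thing one needs to invoke is the existence of the limit defining $\aR_d(f)$, which is already established by the submultiplicativity of rank under tensor product together with Fekete's Lemma.
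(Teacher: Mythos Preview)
Your proof is correct and follows essentially the same approach as the paper: take $k$-th roots of the hypothesis $\rmR_{\bfd}(f^{\otimes k}) \geq M^k$ and pass to the limit, invoking the existence of the limit (via Fekete's Lemma) to conclude $\aR_d(f) \geq M$.
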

\begin{proof}
Consider the inequality $\rank_{\bfd}(f^{\otimes k}) \geq M^k$. Raise both sides to the $1/k$ to obtain the inequality $[\rank_{\bfd}(f^{\otimes k})]^{1/k} \geq M$. Conclude by passing to the limit as $k \to \infty$.
\end{proof}

The definition of the tensor asymptotic rank was inspired by the similar definition of asymptotic rank given by Strassen in the setting of tensors \cite{Str86:Asymptotic} in terms of tensor Kronecker (or flattened) product. In \cite{Bini:RelationsExactApproxBilAlg}, Bini proved that the growth of the tensor rank under Kronecker powers is essentially the same as the growth of the border rank under Kronecker powers, so that the definition of asymptotic rank of a tensor, in the sense of Strassen, can equivalently be given in terms of rank or border rank. The analogous result holds for the tensor asymptotic rank of \eqref{eqn: def asymptotic rank}, as proved in \cite[Theorem 8]{ChrJenZui:NonMultTensorRank} in the case of tensors and in \cite[Proposition 6.2]{ChrGesJen:BorderRankNonMult} in full generality. In particular the border rank of a homogeneous form is an upper bound for its tensor asymptotic rank.

\begin{proposition}[{\cite[Proposition 6.2]{ChrGesJen:BorderRankNonMult}}]\label{prop:LowerBound}
	Let $f \in \Sym^dV$. Then, 
	\[
\aR_{d}(f)~\leq~\uR_{d}(f).	 
	\]
\end{proposition}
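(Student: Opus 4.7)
The plan is a classical Bini-style argument: lift a border-rank approximation of $f$ to a polynomial in $\eps$ whose $\eps$-degree depends only on $f$, tensor it $k$ times, and then recover $f^{\otimes k}$ exactly via polynomial interpolation at the cost of only a polynomial-in-$k$ overhead over $r^k$, where $r = \uR_d(f)$. Since this overhead is subexponential, it is killed by the $k$-th root appearing in the definition of $\aR_d(f)$.

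First, I would produce the approximation. Since $f \in \sigma_r(\calV_{n,d})$, there exist polynomials $\tilde\ell_1(\eps), \ldots, \tilde\ell_r(\eps) \in V[\eps]$ and a nonnegative integer $N$ such that
\[
P(\eps) := \sum_{i=1}^{r} \tilde\ell_i(\eps)^d = \eps^{dN} f + \eps^{dN+1} g(\eps)
\]
for some $g(\eps) \in \Sym^d V[\eps]$. This follows by parameterizing an analytic arc in $\sigma_r(\calV_{n,d})$ through $f$ whose generic point has rank $r$, expressing the arc as a sum of $d$-th powers of Laurent polynomials in a parameter, and clearing denominators. In particular, the $\eps$-degree of $P(\eps)$ is bounded by a constant $L_0 + dN$ depending only on $f$ (and not on $k$).

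Next, I would take the $k$-fold tensor power
\[
P(\eps)^{\otimes k} = \sum_{i_1, \ldots, i_k = 1}^{r} \tilde\ell_{i_1}(\eps)^d \otimes \cdots \otimes \tilde\ell_{i_k}(\eps)^d,
\]
which by construction has order $dNk$ at $\eps = 0$ with leading coefficient $f^{\otimes k}$. Setting $Q(\eps) := \eps^{-dNk} P(\eps)^{\otimes k}$ produces a polynomial in $\eps$ of degree at most $L_0 k$ satisfying $Q(0) = f^{\otimes k}$. For every $\eps \neq 0$, the identity
\[
Q(\eps) = \sum_{i_1, \ldots, i_k = 1}^{r} \bigl(\eps^{-N}\tilde\ell_{i_1}(\eps)\bigr)^d \otimes \cdots \otimes \bigl(\eps^{-N}\tilde\ell_{i_k}(\eps)\bigr)^d
\]
exhibits $Q(\eps)$ as a sum of $r^k$ partially symmetric rank-$1$ tensors, so $\rmR_\bfd(Q(\eps)) \leq r^k$. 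Lagrange interpolation at $L_0 k + 1$ distinct nonzero values $\eps_0, \ldots, \eps_{L_0 k}$ yields scalars $c_j$ with $f^{\otimes k} = Q(0) = \sum_{j} c_j Q(\eps_j)$, giving
\[
\rmR_\bfd(f^{\otimes k}) \leq (L_0 k + 1)\, r^k.
\]

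Taking $k$-th roots and passing to the limit then yields
\[
\aR_d(f) = \lim_{k \to \infty} \bigl[\rmR_\bfd(f^{\otimes k})\bigr]^{1/k} \leq r \cdot \lim_{k \to \infty} (L_0 k + 1)^{1/k} = r = \uR_d(f).
\]
The main technical obstacle is the initial input: producing the polynomial approximation $P(\eps)$ with $\eps$-degree bounded independently of $k$. This is a standard consequence of the equivalence of Euclidean and Zariski closures for constructible sets, combined with clearing denominators in a Puiseux parameterization of an arc through $f$. Once this is in place, the rest is a routine manipulation: tensoring multiplies the $\eps$-degree by $k$, so the interpolation overhead is only linear in $k$, and its $k$-th root tends to $1$.
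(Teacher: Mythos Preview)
Your argument is correct; it is the classical Bini interpolation argument. The paper does not reprove this proposition (it cites \cite{ChrGesJen:BorderRankNonMult}), but the proof of the cactus analogue, Proposition~\ref{prop: lower bound to border cactus}, is stated to follow ``the same argument'', so one can compare against that.

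The route taken there is genuinely different in language, though ultimately the same idea. Rather than fixing a polynomial expression $P(\eps)=\sum_i\tilde\ell_i(\eps)^d$ and interpolating, the paper works geometrically: it slices the secant variety $\sigma_r(\calV_{n,d})$ by a generic linear space through $[f]$ to obtain an irreducible curve $E\ni[f]$ of some degree $e$ whose generic point has rank $r$; then $[f^{\otimes k}]$ lies on the re-embedded curve $\nu_k(E)$ of degree $ek$, and any $ek+1$ generic points of $\nu_k(E)$ span $[f^{\otimes k}]$, each such point being of the form $[g^{\otimes k}]$ with $\rmR_\bfd(g^{\otimes k})\le r^k$. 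This yields the same inequality $\rmR_\bfd(f^{\otimes k})\le (ek+1)r^k$.

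Your $\eps$-parameterised family is a concrete affine chart of such a curve, and Lagrange interpolation at $L_0k+1$ nodes is the affine counterpart of ``$ek+1$ generic points span the degree-$ek$ curve''. What your approach buys is that it is self-contained and entirely elementary once the polynomial approximation $P(\eps)$ is granted; what the paper's formulation buys is that it transports verbatim to the cactus setting (Proposition~\ref{prop: lower bound to border cactus}), where one does not have an explicit sum-of-powers expression to manipulate but can still cut out a curve inside an irreducible component of $\frakk_r(\calV_{n,d})$.
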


One can define a ``cactus analog'' of these notions. Let $f \in \Sym^d V$. The \emph{tensor asymptotic cactus rank} of $f$ is 
\begin{equation}\label{eqn: def asymptotic cactus rank}
\acR_{d}(f) = \lim_{k \to \infty} \left[ \cR_{\bfd} (f^{\otimes k}) \right]^{1/k}.
\end{equation}
Fekete's Lemma and the submultiplicative properties of cactus rank guarantee that this limit exists as well. Moreover, the analog of Lemma \ref{lemma:MultiplicativeLowerBound} holds.

The proof that the analog of \cite[Proposition 6.2]{ChrGesJen:BorderRankNonMult} holds for cactus varieties is slightly more delicate. The difficulty is caused by the fact that cactus varieties are in general reducible and not equidimensional: the argument is essentially the same, with the only modification that one considers a single irreducible component of the cactus variety.

\begin{remark}
In \cite[Proposition 6.2]{ChrGesJen:BorderRankNonMult}, it was claimed that the set $\sigma^\circ_r(X) = \{ q : \uR_X(q) = \rmR_X(q) = r\}$ is a Zariski-open subset in $\sigma_r(X)$. This is not true in general, even in the case of Veronese varieties. However, by Chevalley's Theorem, $\sigma^\circ_r(X)$ is a \emph{constructible} set, in the sense of \cite[\S2.C]{Mum:ComplProjVars}, and in particular it contains a Zariski-open subset of $\sigma_r(X)$. The proof is not affected by this oversight.
\end{remark}

We give some additional details for the generalization of \cite[Proposition 6.2]{ChrGesJen:BorderRankNonMult}, pinpointing the difference between the setting of secant varieties and the one of cactus varieties.
\begin{proposition}\label{prop: lower bound to border cactus}
Let $f \in \Sym^dV$. Then, 
\[
\acR_{d}(f) \leq \ucR_{d}(f).	 
\]
\end{proposition}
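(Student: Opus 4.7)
The plan is to adapt the argument of \cite[Prop.~6.2]{ChrGesJen:BorderRankNonMult} for secant varieties to the cactus setting; the essential modification is a choice of irreducible component, since cactus varieties are in general reducible. Set $r = \ucR_d(f)$, so $[f] \in \frakk_r(\calV_{n,d})$, and fix an irreducible component $Y$ of $\frakk_r(\calV_{n,d})$ containing $[f]$. The set $T = \{[g] \in Y : \cR_d(g) \leq r\}$ is constructible, by Chevalley's theorem applied to the natural map from the Hilbert scheme of length-$r$ zero-dimensional subschemes of $\bbP \bbC^{n+1}$ (together with its universal linear span) to $\bbP \Sym^d \bbC^{n+1}$; moreover, since $Y$ is the closure of the image coming from a single irreducible component of the Hilbert scheme, $T$ is Zariski-dense in $Y$. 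Writing $T$ as a finite union of locally closed irreducible pieces, irreducibility of $Y$ forces one such piece to be Zariski-open in $Y$; call it $U$. As $Y$ is an irreducible complex algebraic variety, the non-empty Zariski-open $U$ is Euclidean-dense in $Y$, so I can pick a sequence $[f_\varepsilon] \to [f]$ in the Euclidean topology with $[f_\varepsilon] \in U$, and hence $\cR_d(f_\varepsilon) \leq r$ for every $\varepsilon$.

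By submultiplicativity of cactus rank under tensor products, $\cR(f_\varepsilon^{\otimes k}) \leq r^k$ for every $k$, where $\cR$ denotes the cactus rank in multidegree $(d,\ldots,d)$. Passing to the Euclidean limit $\varepsilon \to 0$ yields $\ucR(f^{\otimes k}) \leq r^k$. To promote this border cactus rank bound to one on $\cR(f^{\otimes k})$, I apply a Bini-type bound for cactus rank: if $\ucR(h) \leq s$ for $h$ a partially symmetric tensor of total degree $dk$, then $\cR(h) \leq s \cdot p(dk)$ for some fixed polynomial $p$ (the cactus analog of the classical Bini inequality $\rmR \leq p(d)\cdot\uR$). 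Combining the two bounds, $(\cR(f^{\otimes k}))^{1/k} \leq r \cdot p(dk)^{1/k}$, and since $p(dk)^{1/k} \to 1$ as $k \to \infty$, taking limits yields $\acR_d(f) \leq r = \ucR_d(f)$.

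The restriction to a single irreducible component of $\frakk_r$ is what distinguishes the argument from the secant case of \cite[Prop.~6.2]{ChrGesJen:BorderRankNonMult}, where $\sigma_r$ is irreducible, and it is what the authors flag as the source of delicacy. The hard part is the Bini-type transfer in the cactus setting: the incidence variety $\{(S, p) : p \in \langle \nu_{(d,\ldots,d)}(S)\rangle\}$ is not closed in general, so the limit $\lim_\varepsilon \langle \nu_{(d,\ldots,d)}(S_\varepsilon)\rangle$ of linear spans (for schemes $S_\varepsilon$ realising $\cR(f_\varepsilon^{\otimes k}) \leq r^k$) can be strictly larger than the span of the limit scheme. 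Controlling this discrepancy requires absorbing the extra directions, which come from first-order deformations in the Hilbert scheme, into the span of a thickened scheme whose length is bounded polynomially in $dk$; what makes the whole argument work is precisely that only a polynomial-in-$dk$ bound is needed, since $p(dk)^{1/k} \to 1$.
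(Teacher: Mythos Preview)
Your argument has a genuine gap at the step you yourself flag as ``the hard part'': the Bini-type inequality $\cR(h) \leq p(dk)\cdot \ucR(h)$ for partially symmetric tensors. No such result is available in the literature, and the sketch you give (absorbing limit directions into a thickened scheme of polynomially bounded length) is not a proof. The difficulty is real: the degree of approximation in a border expression is not a priori bounded in terms of the total degree alone, and for $f^{\otimes k}$ the number of tensor factors grows with $k$, which is exactly the regime where naive Bini-type bounds blow up. Note also that your entire first paragraph is unnecessary for your strategy: submultiplicativity of border cactus rank already gives $\ucR_{\bfd}(f^{\otimes k}) \leq \ucR_d(f)^k$ directly, so everything hinges on the unproved transfer $\ucR \rightsquigarrow \cR$.

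The paper avoids this gap by never comparing $\cR$ and $\ucR$ for $f^{\otimes k}$. Instead, after restricting to an irreducible component $Z$ of $\frakk_r(\calV_{n,d})$ through $[f]$ and identifying the constructible locus $Z^\circ$ of honest cactus-rank-$r$ points, it slices $Z$ by a generic linear space to obtain an irreducible curve $E \ni [f]$ of some degree $e$, whose generic point lies in $Z^\circ$. Then $[f^{\otimes k}]$ lies on the re-embedded curve $\nu_k(E)$, which has degree $ek$; hence $ek+1$ generic points $[g^{\otimes k}]$ of $\nu_k(E)$ span it, and for each such $g$ one has $\cR_{\bfd}(g^{\otimes k}) \leq \cR_d(g)^k = r^k$ by submultiplicativity. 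This yields directly $\cR_{\bfd}(f^{\otimes k}) \leq (ek+1)\,r^k$, a bound on the \emph{cactus} rank with a factor linear in $k$ and a constant $e$ depending only on $f$. Taking $k$-th roots and letting $k\to\infty$ finishes the proof. The curve trick is precisely what replaces the missing Bini inequality: it manufactures, for each $k$, an honest cactus-rank expression of controlled length, rather than trying to upgrade a border one.
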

\begin{proof}
Following the same argument as \cite[Proposition 6.2]{ChrGesJen:BorderRankNonMult}, we determine a constant $e$ such that, for every $k$, $\cR_{\bfd}(f^{\otimes k}) \leq \ucR_d(f)^k (ek + 1)$.

Let $r = \ucR_d(f)$ so that $[f] \in \frakk_r(\calV_{d,n})$. Let $Z$ be an irreducible component of $\frakk_r(\calV_{d,n})$ such that $[f] \in Z$ and let $Z^\circ = \{ g \in Z: \cR_d(g) = \ucR_d(g) = r\}$. The set $Z^\circ$ is constructible and in particular it contains a Zariski-open subset of $Z$. 

At this point the proof follows exactly the same construction as in \cite{ChrGesJen:BorderRankNonMult}. We quickly sketch the rest of the argument. One considers a generic linear space $L$ through $[f]$ with $\dim L = \codim Z +1$. The variety $L \cap Z$ is a (possibly reducible) curve in $Z$ and by the genericity of $L$ every component contains a Zariski-open subset of points in $Z^\circ$. Let $E$ be an irreducible curve in $L \cap Z$ such that $[f] \in E$ and let $e = \deg(E)$. Hence, $[f^{\otimes k}] \in \nu_k( E) \subseteq \bbP \Sym^k ( \Sym ^d V) \subseteq  \bbP ( \Sym ^d V)^{\otimes k}$ and $\deg (\nu_k( E)) = ek$. A set of $ek+1$ generic points in $\nu_k(E)$ span the entire $\langle \nu_k( E ) \rangle$ and in particular they span $[f^{\otimes k}]$. For generic points $[g^{\otimes k}] \in \nu_k(E)$, one has $\cR_{\bfd}(g^{\otimes k}) \leq \cR_{d}(g)^k = r^k$ by submultiplicativity. We conclude $\rmR_\bfd(f) \leq (ek+1)r^k$, as desired.
\end{proof}

\section{Ranestad-Schreyer lower bound}\label{sec: RS}
In this section, we focus on the Ranestad-Schreyer lower bound for cactus rank, proved in \cite[Proposition~1]{RanSch:RankSymmetricForm} and on the generalization proposed by Teitler in \cite{Tei14:GeometricBounds}.

We will show that these lower bounds extend to border cactus rank and thus to border rank under the hypothesis that the tensor of interest, and its tensor powers satisfy $(\star_c)$.

For any homogeneous polynomial $f \in \Sym^dV$, denote by $A_f$ the quotient algebra $\Sym^\bullet V^* / f^\perp$. Since $(f^\perp)_e = \Sym^e V$ for $e > d$, we deduce that $A_f$ is a finite dimensional vector space, and, since $f^\perp$ is a homogeneous ideal, the quotient algebra inherits the grading so that $A_f = \bigoplus _{e=0}^d (A_f)_e$. 

Similarly, if $t \in \Sym^{d_1}V_1 \ootimes \Sym^{d_k}V_k$, define $A_t = \Sym^\bullet (V_1^* \ooplus V_k^*) / t^\perp$, which is again multigraded and  finite dimensional because $t^\perp$ is a multihomogeneous ideal and $(t^\perp)_{\bfe} = 0$, if $e_j > d_j$ for at least one $j$. In particular, $A_t = \bigoplus_{\substack{e_i = 0 \vvirg d_i \\ i = 1,\ldots,k}} (A_t)_{\bfe}$.

The Ranestad-Schreyer lower bound for homogeneous forms is as follows:
\begin{proposition}[{\cite[Proposition~1]{RanSch:RankSymmetricForm}}]
Let $f \in S^d V$ be a homogeneous polynomial such that $f^\perp$ is generated in degree $\delta$. Then
\[
 \rmR_{\bfd}(f) \geq \cR_{\bfd}(f) \geq \frac{\dim A_f}{\delta}.
\]
\end{proposition}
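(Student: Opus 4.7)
The inequality $\rmR_\bfd(f) \geq \cR_\bfd(f)$ is immediate, as a reduced set of $r$ points is a $0$-dimensional scheme of length $r$, so the infimum defining $\cR_\bfd$ is taken over a broader class; the content is the second inequality. The plan is as follows. Let $r = \cR_\bfd(f)$ and fix a $0$-dimensional scheme $S \subseteq \bbP V$ of length $r$ with $[f] \in \langle \nu_d(S) \rangle$. Writing $R = \Sym^\bullet V^*$ and letting $I_S \subseteq R$ be the saturated homogeneous ideal of $S$, Macaulay's Apolarity Lemma translates $[f] \in \langle \nu_d(S) \rangle$ into the containment $I_S \subseteq f^\perp$. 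One thereby obtains a graded surjection $B := R/I_S \twoheadrightarrow A_f = R/f^\perp$, and it suffices to show that its kernel $J := f^\perp / I_S$ satisfies $\dim_\bbC B/J \leq \delta r$.

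\textbf{The Bezout-type bound.} Since $S$ is $0$-dimensional and $I_S$ is saturated, $B$ is a graded Cohen-Macaulay ring of Krull dimension $1$, whose Hilbert series has the form $P(t)/(1-t)$ with $P(1) = r$. If $g \in B_\delta$ is a homogeneous nonzerodivisor, then $B/(g)$ is Artinian with Hilbert series $P(t)(1-t^\delta)/(1-t)$, and specializing at $t = 1$ yields $\dim_\bbC B/(g) = r\delta$. Any such $g$ lying in $J_\delta$ then produces a chain of surjections $B/(g) \twoheadrightarrow B/J = A_f$, whence $\dim A_f \leq r\delta$, as required.

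\textbf{Producing a nonzerodivisor in $J_\delta$.} The substantive task is to exhibit $g \in J_\delta$ avoiding every minimal prime $\frakp_1, \dots, \frakp_t$ of $B$; these correspond to the cones over the irreducible components of $S$. Since $A_f = B/J$ is Artinian, $J$ is not contained in any $\frakp_i$. I would leverage the hypothesis that $f^\perp$ is generated in degree $\delta$: for every minimal generator $\gamma$ of $f^\perp$ of some degree $e \leq \delta$, the whole subspace $R_{\delta - e} \cdot \gamma$ lies in $(f^\perp)_\delta$ and hence contributes to $J_\delta$ modulo $I_S$. Each minimal prime $\frakp_i$ is the defining ideal of a line through the origin, so $B_m \not\subseteq \frakp_i$ for any $m \geq 0$. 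If $J_\delta \subseteq \frakp_i$ for some $i$, then primality of $\frakp_i$ combined with $B_{\delta - e} \not\subseteq \frakp_i$ would force $\bar\gamma \in \frakp_i$ for every generator $\gamma$, hence $J \subseteq \frakp_i$, a contradiction. So $J_\delta \not\subseteq \frakp_i$ for every $i$, and graded prime avoidance over the infinite field $\bbC$ yields the required $g \in J_\delta \setminus \bigcup_i \frakp_i$.

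\textbf{Main obstacle.} The delicate point is the last paragraph: turning the generation hypothesis on $f^\perp$ into the degree-specific non-containment $J_\delta \not\subseteq \frakp_i$, in the face of the possibility that individual minimal generators of $f^\perp$ collapse modulo $I_S$. Once that step is secured, the Cohen-Macaulay Hilbert-series computation closes the argument mechanically.
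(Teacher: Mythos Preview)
The paper does not supply its own proof of this proposition: it is quoted from \cite{RanSch:RankSymmetricForm} and used as a black box. Your argument is essentially the original Ranestad--Schreyer proof, and it is correct.

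Your ``main obstacle'' is not actually an obstacle. If a minimal generator $\gamma$ of $f^\perp$ happens to lie already in $I_S$, then $\bar\gamma = 0 \in \frakp_i$ trivially, so your implication ``$J_\delta \subseteq \frakp_i \Rightarrow \bar\gamma \in \frakp_i$ for every generator $\gamma$'' goes through unchanged, and the conclusion $J \subseteq \frakp_i$ still contradicts $\dim B/J = 0 < 1 = \dim B/\frakp_i$. The one point worth making explicit is that nonzerodivisors on $B$ are the elements avoiding all \emph{associated} primes, not merely the minimal ones; but since $I_S$ is saturated one has $\mathrm{depth}\,B \geq 1$, so $B$ is Cohen--Macaulay of dimension $1$, associated and minimal primes coincide, and your graded prime-avoidance step really does produce a nonzerodivisor in $J_\delta$.
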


In \cite{Tei14:GeometricBounds}, Teitler proposed a generalization to partially symmetric tensors. We do not know whether this generalization holds in general; conditions $(\star)$ and $(\star_c)$ introduced in Section \ref{sec: intro} identify exactly those tensors for which the generalization is true. 

The following result related the apolar ideal of a tensor product with the apolar ideals of its factors and correspondingly their apolar algebras.

\begin{proposition}\label{prop: RS multiplicative}
 Let $t = f_1 \ootimes f_k$, with $f_i \in \Sym^{d_i}V_i$. Then $A_t \simeq A_{f_1} \ootimes A_{f_k}$ as multigraded algebras. In particular, 
 \[
\dim A_t = (\dim A_{f_1}) \cdots (\dim A_{f_k}).  
 \]
 Moreover, if $f_i^\perp$ is generated in degree at most $\delta_i$, for $i = 1,\ldots,k$, then $t^\perp$ is generated in degree at most $\boldsymbol{\delta} = (\delta_1 \vvirg \delta_k)$.
 \end{proposition}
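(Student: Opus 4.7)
The plan is to combine Lemma \ref{lemma:apolar} with the standard identification of quotients of tensor products of commutative algebras over a field.

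To begin, I would apply Lemma \ref{lemma:apolar} to write
$$t^\perp = (f_1^\perp)^{ext} + \cdots + (f_k^\perp)^{ext}.$$
Next, I would use the canonical multigraded isomorphism
$$\Sym^\bullet(V_1^* \oplus \cdots \oplus V_k^*) \simeq \Sym^\bullet V_1^* \otimes \cdots \otimes \Sym^\bullet V_k^*,$$
under which the extended ideal $(f_i^\perp)^{ext}$ corresponds to $\Sym^\bullet V_1^* \otimes \cdots \otimes f_i^\perp \otimes \cdots \otimes \Sym^\bullet V_k^*$. A short calculation, using the right-exactness of the tensor product over a field, then yields the multigraded isomorphism
$$A_t = \frac{\Sym^\bullet V_1^* \otimes \cdots \otimes \Sym^\bullet V_k^*}{\sum_i \Sym^\bullet V_1^* \otimes \cdots \otimes f_i^\perp \otimes \cdots \otimes \Sym^\bullet V_k^*} \simeq A_{f_1} \otimes \cdots \otimes A_{f_k},$$
which gives the main claim. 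The equality $\dim A_t = \prod_i \dim A_{f_i}$ is then immediate from $\dim(A \otimes B) = \dim A \cdot \dim B$.

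For the statement about generators I would argue as follows. If $\{g_{i,j}\}_j$ generates $f_i^\perp$ in $\Sym^\bullet V_i^*$ with $\deg g_{i,j} \leq \delta_i$, then by Lemma \ref{lemma:apolar} the union $\{g_{i,j}\}_{i,j}$ generates $t^\perp$ in $\Sym^\bullet(V_1^* \oplus \cdots \oplus V_k^*)$. Viewed multihomogeneously, each $g_{i,j}$ has multidegree supported only in the $i$-th slot with value at most $\delta_i$, hence componentwise bounded by $\boldsymbol{\delta} = (\delta_1,\ldots,\delta_k)$.

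I do not anticipate a serious obstacle. The only point that deserves care is to verify that the identification of $A_t$ as a tensor product of quotients is compatible with the multigraded structure inherited from $\Sym^\bullet(V_1^* \oplus \cdots \oplus V_k^*)$, which is standard for commutative algebras over a field and where the ground field hypothesis (here $\bbC$) makes the right-exactness of the tensor product available, so that the quotient splits cleanly as a tensor product of the individual apolar algebras.
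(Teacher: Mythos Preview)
Your argument is correct. Both proofs rest on Lemma~\ref{lemma:apolar}, but they diverge after that. The paper builds the map $\phi\colon A_{f_1}\ootimes A_{f_k}\to A_t$ from the universal property of the tensor product of algebras, checks surjectivity by hand, and then establishes injectivity by a dimension count on each multigraded piece: it uses the factorization $\cat_{\bfe}(t)=\cat_{e_1}(f_1)\bboxtimes\cat_{e_k}(f_k)$ together with Lemma~\ref{lemma:Kronecker}(i) to match $\dim(A_t)_\bfe$ with $\prod_i\dim(A_{f_i})_{e_i}$. You instead invoke the standard commutative-algebra identity
\[
\frac{B_1\ootimes B_k}{\sum_i B_1\ootimes I_i\ootimes B_k}\;\simeq\;(B_1/I_1)\ootimes(B_k/I_k),
\]
obtained by iterating right-exactness of $-\otimes_{\bbC} B$, and apply it with $B_i=\Sym^\bullet V_i^*$ and $I_i=f_i^\perp$. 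Your route is shorter and bypasses Lemma~\ref{lemma:Kronecker}(i) entirely; the paper's route is more concrete and makes the equality of multigraded Hilbert functions explicit, which is occasionally useful downstream. A minor remark: right-exactness of the tensor product holds over any commutative base ring, so the field hypothesis is not actually needed for the step you flag. Your treatment of the generators of $t^\perp$ is the same as the paper's.
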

 \begin{proof}
For any $i = 1 \vvirg k$, since $(f_i^\perp)^{ext} \subseteq t^\perp$, the inclusion $\Sym^\bullet V_i^* \to \Sym^\bullet( V_1 ^* \ooplus V_k^*)$ descends to the quotient algebras, providing a graded algebra homomorphism $\phi_i: A_{f_i}  \to A_t$. Notice that elements of degree $e$ are mapped to elements of multidegree $(0 \vvirg 0,e,0\vvirg 0)$, where $e$ is at the $i$-th entry. 

By the universal property of tensor products, the $\phi_i$'s lift to a homomorphism of graded algebras
\[
 \phi: A_{f_1} \ootimes A_{f_k} \to A_t
\]
defined by $\phi( g_1 \ootimes g_k) = \phi_1(g_1) \cdots \phi_k(g_k) = g_1 \cdots g_k$ on product elements and extended linearly. Notice that $\phi$ is surjective, because the algebra $A_t$ is generated by elements of total degree equal to one, i.e., the images of the variables $x_{i,j}$, and those are in the image of $\phi$.

We conclude that $\phi$ is an isomorphism by showing that the multigraded components of $ A_{f_1} \ootimes A_{f_k}$ and $A_t$ have the same dimension. Indeed, in multidegree $\bfe$, we have
\begin{align*}
\dim (A_{f_1} \ootimes A_{f_k})_{\bfe} &=  (\dim (A_{f_1})_{e_1} \cdots (\dim (A_{f_k})_{e_k}  \\ &= \rk(\cat_{e_1}(f_1)) \cdots  \rk(\cat_{e_k}(f_k))
\end{align*}
and 
\[
\dim (A_{t})_{\bfe} =  \rk(\cat_{\bfe}(f_1 \ootimes f_k)).
\]
Recall that $\cat_{e_1}(f_1)) \boxtimes \cdots \boxtimes \cat_{e_k}(f_k) = \cat_{\bfe}(f_1 \ootimes f_k)$ (Lemma \ref{lemma:apolar}) and then apply Lemma \ref{lemma:Kronecker}(i).

The second part of the statement is also immediate from Lemma \ref{lemma:apolar}: we can see from the equality $t^\perp =  (f_1^\perp)^{ext} + \ldots + (f_k^\perp)^{ext}$ that $t^\perp$ is generated in multidegrees of the form $(0 \vvirg 0 , \delta_i , 0 \vvirg 0)$, for all $i = 1,\ldots,k$.
 \end{proof}

If $t = f_1 \ootimes f_k$ satisfies $(\star)$ or $(\star_c)$, then Proposition \ref{prop: RS multiplicative} immediately provides the following lower bound for its rank or cactus rank. 
\begin{corollary}\label{corol: multiplicative RS}
Let $f_i \in \Sym^{d_i}V$, for $i = 1,\ldots,k$, and assume that $f_i^\perp$ is generated in degree at most $\delta_i$. If $f_1 \ootimes f_k$ satisfies $(\star)$, then,
\[
	\rmR_{\bfd}(f_1\ootimes f_k) \geq \frac{\dim(A_{f_1})\cdots\dim(A_{f_k})}{\delta_1\cdots\delta_k}
\]
and if $f_1 \ootimes f_k$ satisfies $(\star_c)$, then the lower bound holds also for cactus rank.

In particular, if $f \in \Sym^dV$, $f^\perp$ is generated in degree $\delta$, and $f^{\otimes k}$ satisfies $(\star)$, then, 
\[
 \rmR_{\bfd}(f^{\otimes k}) \geq \left(\frac{\dim A_f}{\delta}\right)^k, \qquad \text{for every }k \geq 1.
\]
and if $f^{\otimes k}$ satisfies $(\star_c)$ the lower bound holds also for cactus rank.
\end{corollary}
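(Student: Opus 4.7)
The plan is to deduce the corollary as a direct consequence of Proposition \ref{prop: RS multiplicative} combined with the hypothesized condition $(\star)$ (resp.\ $(\star_c)$). Setting $t = f_1 \ootimes f_k$, Proposition \ref{prop: RS multiplicative} tells us simultaneously that the apolar algebra factors as $A_t \simeq A_{f_1} \ootimes A_{f_k}$, and that $t^\perp$ is generated in multidegree bounded by $\boldsymbol{\delta} = (\delta_1 \vvirg \delta_k)$. Taking dimensions of the tensor factorization immediately yields $\dim A_t = \dim A_{f_1} \cdots \dim A_{f_k}$.

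Next I would invoke condition $(\star)$ for $t$, which reads $\rmR_{\bfd}(t) \geq \dim A_t / (\delta_1 \cdots \delta_k)$. Substituting the multiplicative formula for $\dim A_t$ gives exactly
\[
\rmR_{\bfd}(f_1 \ootimes f_k) \geq \frac{\dim A_{f_1} \cdots \dim A_{f_k}}{\delta_1 \cdots \delta_k},
\]
which is the desired inequality. Replacing $(\star)$ by $(\star_c)$ and $\rmR_{\bfd}$ by $\cR_{\bfd}$ gives the corresponding bound for cactus rank verbatim.

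For the second part, I would simply specialize to the case $f_1 = \cdots = f_k = f$, with all $\delta_i = \delta$ and all $A_{f_i} = A_f$. Here $t = f^{\otimes k}$, so the first part yields
\[
\rmR_{\bfd}(f^{\otimes k}) \geq \frac{(\dim A_f)^k}{\delta^k} = \left( \frac{\dim A_f}{\delta} \right)^k,
\]
and the cactus version follows by the same substitution assuming $(\star_c)$ for $f^{\otimes k}$.

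I do not anticipate any genuine obstacle: once Proposition \ref{prop: RS multiplicative} is in hand, the corollary is a mechanical combination of the dimension formula, the multidegree bound on generators of $t^\perp$, and the defining inequality of the relevant condition. The only mild subtlety is verifying that the hypothesis of $(\star)$—namely that $t^\perp$ is generated in multidegree $\boldsymbol{\delta}$—is indeed supplied by Proposition \ref{prop: RS multiplicative}; this is immediate since being generated in degree \emph{at most} $\boldsymbol{\delta}$ is precisely what both condition $(\star)$ and the Ranestad--Schreyer inequality are set up to use.
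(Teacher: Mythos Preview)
Your proposal is correct and matches the paper's own treatment: the corollary is stated there as an immediate consequence of Proposition~\ref{prop: RS multiplicative}, and your argument spells out precisely that deduction. The only content is combining the factorization $\dim A_t = \prod_i \dim A_{f_i}$ and the multidegree bound on the generators of $t^\perp$ with the defining inequality of $(\star)$ or $(\star_c)$, exactly as you do.
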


A consequence of these results is that every tensor for which the rank is multiplicative under tensor product satisfies $(\star)$ together with all its tensor powers. Similarly every tensor for which the cactus rank is multiplicative under tensor product satisfies $(\star_c)$ together with all its tensor powers. In particular all monomials for which the flattening lower bounds from \cite{Oe16:BorderMonomials} hold and all the tensor products of any number of them satisfy $(\star)$.

The following completes the proof of Theorem \ref{thm: RS for border rank}:
\begin{proof}[Proof of Theorem \ref{thm: RS for border rank}]
If $f^{\otimes k}$ satisfies $(\star)$ for every $k$, by Corollary \ref{corol: multiplicative RS} we obtain the lower bound $\rmR_{\bfd}(f^{\otimes k}) \geq \left(\frac{\dim A_f}{\delta}\right)^k$, for every $k$. In particular, this is a multiplicative lower bound.

Applying Lemma \ref{lemma:MultiplicativeLowerBound} to this multiplicative lower bound, we obtain that $\aR_d(f) \geq \acR_d(f) \geq \frac{\dim  A_f }{\delta}$. Therefore, by Proposition \ref{prop:LowerBound} and Proposition \ref{prop: lower bound to border cactus}, we deduce that $\uR_d(f) \geq \ucR_d(f) \geq \frac{\dim  A_f }{\delta}$.

The same argument, applied in the case where $f^{\otimes k}$ satisfies $(\star_c)$ for every $k$, provides the lower bound for border cactus rank.
\end{proof}

{\small
\subsection*{Acknowledgements} 
M.C. and F.G. acknowledge financial support from the VILLUM FONDEN via the QMATH Centre of Excellence (Grant no. 10059). A.O. acknowledges financial support from the Alexander von Humboldt-Stiftung (Germany) via a Humboldt Research Fellowship for Postdoctoral Researchers (April 2019 - March 2021). We thank J. Buczy\'nski, M. Ga\l\k{a}zka, L. Oeding, G. Ottaviani and Z. Teitler for their helpful comments on the earlier versions of the paper.
}

\bibliographystyle{alpha}
\bibliography{bibMonom}
  \end{document}